\providecommand\@dotsep{5}
\def\listtodoname{List of Todos}
\def\listoftodos{\@starttoc{tdo}\listtodoname}
\numberwithin{equation}{section}
\newtheorem{theorem}{Theorem}[section]
\newtheorem{proposition}[theorem]{Proposition}
\newtheorem{lemma}[theorem]{Lemma}
\begin{document}

\title[Fractional Kirchhoff-type systems via sub-supersolutions method in $\mathbb{H}^{\alpha,\beta;\psi}_{p}(\Omega)$]{Fractional Kirchhoff-type systems via sub-supersolutions method in $\mathbb{H}^{\alpha,\beta;\psi}_{p}(\Omega)$}

\author{J. Vanterler da C. Sousa}

\address[J. Vanterler da C. Sousa]
{\newline\indent Aerospace Engineering, PPGEA-UEMA
\newline\indent
Department of Mathematics, DEMATI-UEMA
\newline\indent
São Luís, MA 65054, Brazil.}
\email{\href{vanterler@ime.unicamp.br}{vanterler@ime.unicamp.br}}

\pretolerance10000


\begin{abstract} {\color{red} In the present paper, we first establish a version of the abstract lower and upper-solution method for our class of operators. In this sense, we investigated the main objective of this paper, that is, the existence of a positive solution for a new class of fractional systems of the Kirchhoff type with $\psi$-Hilfer operators via the method of sub and supersolutions in $\psi$-fractional space $\mathbb{H}^{\alpha,\beta;\psi}_{p}(\Omega)$.}

\end{abstract}

\subjclass[2010]{35R11,35A15,35,J65,47J10,47J30.} 
\keywords{Fractional $p$-Laplacian, Kirchhoff type systems, Positive solution, Sub and supersolution method}
\maketitle
\section{Introduction and motivation}

In this paper, we concern a new class of fractional Kirchhoff-type systems given by
\begin{equation}\label{P} 
\left\{
\begin{array}{lcc}
\mathfrak{M}_{1}\left(\displaystyle\int_{\Omega} \left\vert ^{\rm H}\mathfrak{D}^{\alpha,\beta;\psi}_{0+} \eta_{1}\right\vert^{p}d\xi \right) \Delta^{\alpha,\beta;\psi}_{p} \eta_{1} = \zeta a(\xi) f(\eta_{1},\eta_{2}),\,\, in \,\, \Omega=[0,T]\times [0,T]\subset{\mathbb{R}^{2}}\\
    \mathfrak{M}_{2}\left(\displaystyle\int_{\Omega} \left\vert ^{\rm H}\mathfrak{D}^{\alpha,\beta;\psi}_{0+} \eta_{2}\right\vert^{q}d\xi \right) \Delta^{\alpha,\beta;\psi}_{q}\eta_{2} = \zeta b(\xi) \chi(\eta_{1},\eta_{2}),\,\, in \, \Omega=[0,T]\times [0,T]\subset{\mathbb{R}^{2}}\\
\eta_{1}=\eta_{2}=0,\, on\,\partial\Omega
\end{array}
\right.
\end{equation}
where
\begin{equation}\label{fra}
    \Delta^{\alpha,\beta;\psi}_{p} \eta_{1} :=\sum_{i=1}^{2} \,\,
^{\rm H}\mathfrak{D}_{T}^{\alpha ,\beta ;\psi }\left( \left\vert ^{\rm H}\mathfrak{D}_{0+}^{\alpha ,\beta
;\psi }\eta_{1}(\xi_{i})\right\vert ^{p-2}\text{ }^{\rm H}\mathfrak{D}_{0+}^{\alpha ,\beta ;\psi
}\eta_{1}(\xi_{i})\right)
\end{equation}
and
\begin{equation}\label{fra1}
    \Delta^{\alpha,\beta;\psi}_{q} \eta_{2}:=\sum_{i=1}^{2} \,\,
^{\rm H}\mathfrak{D}_{T}^{\alpha ,\beta ;\psi }\left( \left\vert ^{\rm H}\mathfrak{D}_{0+}^{\alpha ,\beta
;\psi }\eta_{2}(\xi_{i})\right\vert ^{q-2}\text{ }^{\rm H}\mathfrak{D}_{0+}^{\alpha ,\beta ;\psi
}\eta_{2}(\xi_{i})\right).
\end{equation}
{\color{red} Here $^{\rm H}\mathfrak{D}^{\alpha,\beta;\psi}_{T} (\cdot)$, $^{\rm H}\mathfrak{D}^{\alpha,\beta;\psi}_{0+} (\cdot)$ are the right and left $\psi$-Hilfer fractional operators of order $\frac{1}{p}<\alpha<1$, $\Omega$ is a bounded smooth domain of $\mathbb{R}^{2}$, $p,q>1$, $p>q$, $\zeta>0$, the functions $\mathfrak{M}_{i},a,b$ satisfy:}

${\bf (H_{1})}$ $\mathfrak{M}_{i}:\mathbb{R}^{+}_{0}\rightarrow\mathbb{R}^{+}$, $i=1,2$, are two continuous and increasing functions and $\mathfrak{M}_{i}(t)\geq m_{i}>0$ for all $t\in \mathbb{R}_{0}^{+}$, where $\mathbb{R}^{+}_{0}=[0,+\infty]$.

${\bf(H_{2})}$ $a,b\in C(\overline{\Omega})$ and $a(\xi)\geq a_{0}>0$, $b(\xi)\geq b_{0}$ for all $\xi\in \overline{\Omega}$.

${\bf(H_{3})}$ $f,\chi \in C^{1} ((0,\infty)\times (0,\infty)) \times C ([0,\infty)\times [0,\infty))$ are monotone functions such that $f_{s}, f_{t}, \chi_{s}, \chi_{t}\geq 0$ and $\lim_{s,t\rightarrow\infty} f(s,t)=\lim_{s,t\rightarrow\infty} \chi(s,t)=\infty$.

${\bf(H_{4})}$ $\lim_{t\rightarrow\infty} \dfrac{f(t,\mathfrak{M}(\chi(t,t))^{1/q-1})}{t^{p-1}}=0$ for $\mathfrak{M}>0$,

${\bf(H_{5})}$ $\lim_{t\rightarrow\infty} \dfrac{\chi(t,t)}{t^{q-1}}=0$.

The Kirchhoff proposed a model given by equation
\begin{equation*}
    \rho \frac{\partial^{2} \eta_{1}}{\partial t^{2}}- \left(\frac{\rho_{0}}{h}+ \frac{E}{2L}\int_{0}^{L} \left\vert \frac{\partial \eta_{1}}{\partial x}\right\vert^{2} dx\right) \frac{\partial^{2} \eta_{1}}{\partial x^{2}}=0,
\end{equation*}
where $\rho, ~\rho_{0}, ~L, ~h,~ E$ are constants, which extends the classical D'Alembert's wave equation.

The operator $$\Delta_{p(x)} u:= -div\left( |\nabla u|^{p(x)-2} |\nabla u| \right)$$ is said to be the $p(x)$-Laplacian, and it becomes $p$-Laplacian when $p(x)=p$. The study of mathematical problems with variable exponents is very interesting. We can highlight the existence and multiplicity problem of the solution of $p(x)$-Laplacian equation, $p(x)$-Kirchhoff and $p$-Kirchhoff both in the classical and in the practical sense \cite{He,Arosio,Correa1,Dai,Dai1,Fan,Fan2,Fan1}. See also the problems involving fractional operators and the references therein \cite{Mingqi,Pucci,Mingqi1,Pucci1}. We can also highlight fractional differential equation problems with $p$-Laplacian using variational methods, in particular, Nehari manifold \cite{Srivastava,Sousa1,Sousa80,Sousa,Sousa2,Sousa3,Ezati,Ezati1}.

In 2006, Correa and Figueiredo \cite{Correa1} considered problems of the $p$-Kirchhoff type
\begin{align*}
\left[-M\left(\int_{\Lambda}\left\vert \nabla \eta_{1}\right\vert ^{p} d x\right)\right]^{p-1} \Delta_{p} \eta_{1}&=f(x,\eta_{1}),\,\, \mbox{in}\,\,\Lambda\notag\\
\eta_{1}&=0,\,\,\mbox{on}\,\,\partial\Lambda,
\end{align*} 
and
\begin{align*}
\left[-M\left(\int_{\Lambda}\left\vert \nabla \eta_{1}\right\vert ^{p} d x\right)\right]^{p-1} \Delta_{p} \eta_{1}&=f(x,\eta_{1})+\zeta |\eta_{1}|^{s-2}\eta_{1},\,\, \mbox{in}\,\,\Lambda\notag\\
\eta_{1}&=0,\,\,\mbox{on}\,\,\partial\Lambda.
\end{align*} 
More details about the problems, see \cite{Correa1}.

In 2010, Fan \cite{Fan2} considered the nonlocal $p(x)$-Laplacian Dirichlet problems
\begin{equation*}
    -A(\eta_{1}) \Delta_{p(x)} \eta_{1}(x)= B(\eta_{1}) f(x,\eta_{1}(x)),\,\,in\,\,\Lambda,\,\,\eta_{1}|_{\partial\Lambda}=0,
\end{equation*}
and 
\begin{align}\label{(222)}
-a\left(\int_{\Lambda}\frac{1}{p(x)}\left\vert \nabla \eta_{1}\right\vert ^{p(x)} d x\right) \Delta_{p(x)} \eta_{1}(x)&=b\left(\int_{\Lambda} F(x,\eta_{1}) \right)f(x,\eta_{1}(x)),\,\, \mbox{in}\,\,\Lambda,\,\,\eta_{1}|_{\partial\Lambda},
\end{align} 
in the non-variational and variational form, respectively, where $$F(x,t)=\displaystyle\int_{0}^{t} f(x,s) ds, $$ and $a$ is allowed to be singular at zero. To obtain the existence and uniqueness of solutions for the problem (\ref{(222)}), the authors used variational methods, especially Mountain pass geometry.

In 2022 Wang et al. \cite{wang122}, established conditions to discuss the existence and uniqueness of global solution for fractional $p$-Kirchhoff equation given by
\begin{equation}\label{kirc}
    \left(a+b\int_{\mathbb{R}^{N}} \int_{\mathbb{R}^{N}} \frac{|\eta_{1}(x)-\eta_{1}(y)|^{p}}{|x-y|^{N+ps}} dxdy \right) (-\Delta)_{p}^{s}\eta_{1}-\mu |\eta_{1}|^{p-2}\eta_{1}= |\eta_{1}|^{q-2}\eta_{1}
\end{equation}
where $f(x,\eta_{1})$ is a general nonlinearity. For more details about the problem (\ref{kirc}), see.

Motivated by the problems highlighted above, in this present paper, our main purpose and contribution is to investigate the existence of a positive solution for fractional problems Kirchhoff-type given by problem (\ref{P}). In other words, we are interested in discussing:

\begin{theorem}\label{Teorema1.1} Under the conditions ${\bf(H_{1})-(H_{5})}$, there exists a positive constant $\zeta_{*}$ such that problem {\rm (\ref{P})} has a positive solution when $\zeta\geq \zeta_{*}$.    
\end{theorem}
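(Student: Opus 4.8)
The plan is to invoke the abstract lower-- and upper--solution principle for operators of the form $\mathfrak{M}_i(\cdot)\,\Delta^{\alpha,\beta;\psi}_{p_i}$ established in the previous section: it is enough to exhibit a (weak) subsolution $(\underline{\eta}_1,\underline{\eta}_2)$ and a (weak) supersolution $(\overline{\eta}_1,\overline{\eta}_2)$ of \eqref{P} with $0<\underline{\eta}_i\le\overline{\eta}_i$ in $\Omega$ $(i=1,2)$, and the principle then delivers a solution $(\eta_1,\eta_2)$ with $\underline{\eta}_i\le\eta_i\le\overline{\eta}_i$, which is in particular positive. Below I regard $\Delta^{\alpha,\beta;\psi}_{p}$ as the positive analogue of the $p$-Laplacian on $\mathbb{H}^{\alpha,\beta;\psi}_{p}(\Omega)$ (as forced by \eqref{P}, since $f\ge0$), and I use a first eigenfunction $\phi_1>0$ with $\|\phi_1\|_\infty=1$, $\Delta^{\alpha,\beta;\psi}_{p}\phi_1=\lambda_1\phi_1^{p-1}$, a torsion-type function $e_p>0$ with $\Delta^{\alpha,\beta;\psi}_{p}e_p=1$ and $e_p|_{\partial\Omega}=0$, together with a Hopf-type boundary estimate for them; the corresponding objects $\phi_1^{q},e_q$ are used for $\Delta^{\alpha,\beta;\psi}_{q}$, and I set $\rho_p=\|e_p\|_\infty$, $\rho_q=\|e_q\|_\infty$.

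\textbf{Subsolution.} I would try $\underline{\eta}_1=k\,\frac{p-1}{p}\,\phi_1^{\,p/(p-1)}$ and $\underline{\eta}_2=k\,\frac{q-1}{q}\,(\phi_1^{q})^{\,q/(q-1)}$ for a fixed small $k>0$. The key identity is $\Delta^{\alpha,\beta;\psi}_{p}\underline{\eta}_1=k^{p-1}\bigl(\lambda_1\phi_1^{p}-|{}^{\rm H}\mathfrak{D}^{\alpha,\beta;\psi}_{0+}\phi_1|^{p}\bigr)$: on a thin boundary tube $\Omega_\delta$, the Hopf-type bound $|{}^{\rm H}\mathfrak{D}^{\alpha,\beta;\psi}_{0+}\phi_1|\ge m>0$ together with $\phi_1$ small there makes this $\le-\tfrac12 k^{p-1}m^{p}<0$, while on $\Omega\setminus\Omega_\delta$ it is $\le k^{p-1}\lambda_1$ and $\phi_1\ge\mu>0$; the analogous facts hold for $\underline{\eta}_2$. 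Since $\mathfrak{M}_1$ is continuous and increasing, $\mathfrak{M}_1\bigl(\int_\Omega|{}^{\rm H}\mathfrak{D}^{\alpha,\beta;\psi}_{0+}\underline{\eta}_1|^{p}d\xi\bigr)\le M_1<\infty$, and similarly $M_2$ for $\mathfrak{M}_2$. On $\Omega_\delta$ the two subsolution inequalities hold because their left sides are negative while the nonlinearities are nonnegative on $(0,\infty)^2$; on $\Omega\setminus\Omega_\delta$ they reduce to $M_1k^{p-1}\lambda_1\le\zeta a_0 f(\underline{\eta}_1,\underline{\eta}_2)$ and $M_2k^{q-1}\lambda_1^{q}\le\zeta b_0\chi(\underline{\eta}_1,\underline{\eta}_2)$, which by $\mathrm{(H_2)}$ and the positivity of $f,\chi$ on the compact set $\{\underline{\eta}_1\ge c_1,\ \underline{\eta}_2\ge c_2\}$ (from $\mathrm{(H_3)}$) hold for every $\zeta\ge\zeta_1=\zeta_1(k)$.

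\textbf{Supersolution.} I would look for $\overline{\eta}_1=A\,e_p$ and $\overline{\eta}_2=B\,e_q$, so that $\Delta^{\alpha,\beta;\psi}_{p}\overline{\eta}_1=A^{p-1}$, $\Delta^{\alpha,\beta;\psi}_{q}\overline{\eta}_2=B^{q-1}$, and, using $\mathfrak{M}_i\ge m_i$ and the monotonicity of $f,\chi$ in $\mathrm{(H_3)}$, it suffices that
\[ m_1 A^{p-1}\ \ge\ \zeta\|a\|_\infty\, f(A\rho_p,B\rho_q),\qquad m_2 B^{q-1}\ \ge\ \zeta\|b\|_\infty\, \chi(A\rho_p,B\rho_q). \]
Here the coupling is handled via $\mathrm{(H_4)}$--$\mathrm{(H_5)}$: put $\mathfrak{M}:=\rho_q\bigl(\zeta\|b\|_\infty/m_2\bigr)^{1/(q-1)}>0$, fix $A$ large (sized just below, in terms of $\zeta$), and define $B$ by $B\rho_q=\mathfrak{M}\,\bigl(\chi(A\rho_p,A\rho_p)\bigr)^{1/(q-1)}$. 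By $\mathrm{(H_5)}$, $\chi(s,s)=o(s^{q-1})$ as $s\to\infty$, so for $A$ large one has $B\rho_q\le A\rho_p$; hence $\chi(A\rho_p,B\rho_q)\le\chi(A\rho_p,A\rho_p)$ and, by the choice of $B$ and $\mathfrak{M}$, the second inequality holds. The first inequality reads, with $t:=A\rho_p$,
\[ \frac{m_1}{\rho_p^{p-1}}\,t^{p-1}\ \ge\ \zeta\|a\|_\infty\, f\bigl(t,\ \mathfrak{M}\,(\chi(t,t))^{1/(q-1)}\bigr). \]
By $\mathrm{(H_4)}$ (with this $\mathfrak{M}$), the right-hand side divided by $t^{p-1}$ tends to $0$, so the inequality holds for all $A$ large; fix such an $A=A(\zeta)$.

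\textbf{Ordering and conclusion.} Since $\underline{\eta}_1,\underline{\eta}_2$ are fixed whereas $A(\zeta),B(\zeta)\to\infty$ as $\zeta\to\infty$, comparing the boundary decay of $\phi_1^{p/(p-1)}$, $(\phi_1^{q})^{q/(q-1)}$ with that of $e_p,e_q$ gives $\underline{\eta}_i\le\overline{\eta}_i$ in $\Omega$ for $\zeta\ge\zeta_2$; taking $\zeta_*:=\max\{\zeta_1,\zeta_2\}$ and applying the abstract principle yields, for every $\zeta\ge\zeta_*$, a solution $(\eta_1,\eta_2)$ of \eqref{P} with $0<\underline{\eta}_i\le\eta_i\le\overline{\eta}_i$, as claimed. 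I expect the substantive difficulty to be not this scheme but two ingredients underlying it: first, that the $\psi$-Hilfer fractional operator obeys no exact chain/Leibniz rule, so the identity for $\Delta^{\alpha,\beta;\psi}_{p}\underline{\eta}_1$ and the Hopf-type boundary bound for $\phi_1$ must be obtained through comparison arguments intrinsic to $\mathbb{H}^{\alpha,\beta;\psi}_{p}(\Omega)$ rather than by pointwise differentiation; and second, the joint tuning of $A$ and $B$ imposed by the coupling between the two equations, which is exactly what hypotheses $\mathrm{(H_4)}$ and $\mathrm{(H_5)}$ are designed to accommodate.
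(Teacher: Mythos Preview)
Your overall scheme---eigenfunction-based subsolution, torsion-function-based supersolution, coupling via $(H_4)$--$(H_5)$, then the abstract principle (Proposition~\ref{Proposition2.2})---is exactly the paper's, and your supersolution construction is essentially identical to the paper's (with $A\rho_p$ playing the role of the paper's $c\zeta^{1/(p-1)}$).

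The gap is in the subsolution. You repeatedly assume $f,\chi\ge 0$ on $(0,\infty)^2$ (``since $f\ge 0$'', ``the nonlinearities are nonnegative on $(0,\infty)^2$'', ``the positivity of $f,\chi$ on the compact set \ldots\ (from $(H_3)$)''). But $(H_3)$ only gives continuity, monotonicity, and a limit at infinity; $f(0,0)$ and $\chi(0,0)$ may be negative. If $f(0,0)<0$, then on the boundary tube $\Omega_\delta$, where $\underline\eta_1,\underline\eta_2\to 0$, the right-hand side $\zeta\, a(\xi)f(\underline\eta_1,\underline\eta_2)$ is close to $\zeta\, a_0 f(0,0)\to -\infty$ as $\zeta\to\infty$, while your left-hand side is a \emph{fixed} negative number (because $k$ is fixed). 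So for large $\zeta$ the subsolution inequality fails near $\partial\Omega$. Your interior argument has the same problem: monotonicity alone does not force $f(\underline\eta_1,\underline\eta_2)>0$ on $\{\underline\eta_1\ge c_1,\ \underline\eta_2\ge c_2\}$.

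The paper addresses exactly this point by letting the subsolution scale with $\zeta$. It first observes (from continuity on $[0,\infty)^2$ and monotonicity) that there is $k_0>0$ with $f\ge -k_0/a_0$ and $\chi\ge -k_0/b_0$, and then takes
\[
\Phi_1=\frac{p-1}{p}\Big(\frac{\zeta k_0}{m\,m_1}\Big)^{1/(p-1)}\Theta_{1,p}^{\,p/(p-1)},\qquad
\Phi_2=\frac{q-1}{q}\Big(\frac{\zeta k_0}{m\,m_2}\Big)^{1/(q-1)}\Theta_{1,q}^{\,q/(q-1)}.
\]
On $\overline{\Omega_\delta}$ this makes the left side $\le -\zeta k_0$, which matches $\zeta a_0\cdot(-k_0/a_0)$; on $\Omega\setminus\overline{\Omega_\delta}$, since $\Theta_{1,p},\Theta_{1,q}\ge\sigma>0$, the values $\Phi_1,\Phi_2\to\infty$ with $\zeta$, so the ``$\to\infty$'' part of $(H_3)$ makes $f(\Phi_1,\Phi_2),\chi(\Phi_1,\Phi_2)$ large and the interior inequality holds. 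Your fixed-$k$ construction would be correct under the extra hypothesis $f,\chi\ge 0$, but as written it does not cover the stated assumptions.
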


A natural consequence when working with fractional operators is, in the limit $\alpha\rightarrow 1$, to obtain the entire case. In this sense, taking the limit $\alpha\rightarrow 1$ and $\psi(\xi)=\xi$, we obtain the classical problem in $\mathbb{R}^{2}$, given by
\begin{equation*} 
\left\{
\begin{array}{lcc}
\mathfrak{M}_{1}\left(\displaystyle\int_{\Omega} \left\vert \dfrac{\partial \eta_{1}}{\partial \xi} \right\vert^{p}d\xi \right) \,\,\displaystyle\sum_{i=1}^{2} \,\,
\dfrac{\partial}{\partial \xi_{i}}\left( \left\vert \dfrac{\partial \eta_{1}}{\partial \xi_{i}}\right\vert ^{p-2}\dfrac{\partial \eta_{1}}{\partial \xi_{i}} \right) = \zeta a(\xi) f(\eta_{1},\eta_{2}),\,\, in \, \Omega\\
    \mathfrak{M}_{2}\left(\displaystyle\int_{\Omega} \left\vert \dfrac{\partial \eta_{2}}{\partial \xi} \right\vert^{q}d\xi \right) \,\,\displaystyle\sum_{i=1}^{2} \,\,
\dfrac{\partial}{\partial \xi_{i}}\left( \left\vert \dfrac{\partial \eta_{2}}{\partial \xi_{i}}\right\vert ^{q-2}\dfrac{\partial \eta_{2}}{\partial \xi_{i}} \right) = \zeta b(\xi) \chi(\eta_{1},\eta_{2}),\,\, in \, \Omega\\
\eta_{1}=\eta_{2}=0,\, on\,\partial\Omega
\end{array}
\right.
\end{equation*}
with the same conditions on $\Theta$ as given in the problem (\ref{P}).
On the other hand, with the freedom of choice of $\beta$ and especially of the function $\psi(\cdot)$, it is possible to obtain a wide class of fractional problems of the Kirchhoff-type. Consequently, the {\bf Theorem \ref{Teorema1.1}} is valid for each particular case. Thus, we finish Section 1.

In Section 2, we present some definitions and results of great importance for the accomplishment of this work. In Section 3, the main result is addressed, i.e., the investigation of the existence of a positive solution to the problem (\ref{P}).

\section{Previous results}

Let $\Omega =(0,T)\times (0,T)\subset \mathbb{R}^{2}$.  Consider the Lebesgue space $L^{p}(\Omega )$ is defined by \cite{Sousa80}
\begin{equation}
L^{p}(\Omega )=\left\{ \Theta(\xi)|\text{ }\Theta\text{ is measurable in }\Omega 
\text{ and }\int_{\Omega }|\Theta|^{p}d\xi<\infty \right\} 
\end{equation}
with the norm
\begin{equation}
\left\Vert \Theta\right\Vert _{L^{p}(\Omega )}=\inf \left\{\int_{\Omega }\left\vert \Theta\right\vert ^{p}d\xi\leq
+\infty \right\} .
\end{equation}

Let $\theta=(\theta_{1},\theta_{2},\theta_{3})$, $T=(T_{1},T_{2},T_{3})$ and $\mu=(\mu_{1},\mu_{2},\mu_{3})$ where $0<\mu_{1},\mu_{2},\mu_{3}<1$ with $\theta_{j}<T_{j}$, for all $j\in \left\{1,2,3 \right\}$. Also put $\Lambda=I_{1}\times I_{2}\times \times I_{3}=[\theta_{1},T_{1}]\times [\theta_{2},T_{2}]\times [\theta_{3},T_{3}]$, where $T_{1},T_{2},T_{3}$ and $\theta_{1},\theta_{2},\theta_{3}$ positive constants. Consider also $\psi(\cdot)$ be an increasing and positive monotone function on $(\theta_{1},T_{1}),(\theta_{2},T_{2}),(\theta_{3},T_{3})$, having a continuous derivative $\psi'(\cdot)$ on $(\theta_{1},T_{1}],(\theta_{2},T_{2}],(\theta_{3},T_{3}]$. The $\psi$-Riemann-Liouville fractional partial integrals of $\Theta\in \mathscr{L}^{1}(\Lambda)$ of order $\mu$ $(0<\mu<1)$ are given by \cite{Srivastava,J1}:
\begin{itemize}
    \item 1-variable: right and left-sided
\begin{equation*}
    {\bf I}^{\mu,\psi}_{\theta_{1}} \Theta(\xi_{1})=\dfrac{1}{\Gamma(\mu_{1})} \int_{\theta_{1}}^{\xi_{1}} \psi'(s_{1})(\psi(\xi_{1})- \psi(s_{1}))^{\mu_{1}-1} \Theta(s_{1}) ds_{1},\,\,to\,\,\theta_{1}<s_{1}<\xi_{1}
\end{equation*}
and
\begin{equation*}
    {\bf I}^{\mu,\psi}_{T_{1}} \Theta(\xi_{1})=\dfrac{1}{\Gamma(\mu_{1})} \int_{\xi_{1}}^{T_{1}} \psi'(s_{1})(\psi(s_{1})- \psi(\xi_{1}))^{\mu_{1}-1} \Theta(s_{1}) ds_{1},\,\,to\,\,\xi_{1}<s_{1}<T_{1},
\end{equation*}
with $\xi_{1}\in[\theta_{1},T_{1}]$, respectively.
    
\item 3-variables: right and left-sided
\begin{eqnarray*}
    &&{\bf I}^{\mu,\psi}_{\theta} \Theta(\xi_{1},\xi_{2},\xi_{3})\notag\\&&=\dfrac{1}{\Gamma(\mu_{1})\Gamma(\mu_{2})\Gamma(\mu_{3})} \int_{\theta_{1}}^{\xi_{1}}
    \int_{\theta_{2}}^{\xi_{2}}
    \int_{\theta_{3}}^{\xi_{3}}
    \psi'(s_{1})\psi'(s_{2})\psi'(s_{3})
    (\psi(\xi_{1})- \psi(s_{1}))^{\mu_{1}-1}\notag\\&&
    \times
    (\psi(\xi_{2})- \psi(s_{2}))^{\mu_{2}-1}
    (\psi(\xi_{3})- \psi(s_{3}))^{\mu_{3}-1}
    \Theta(s_{1},s_{2},s_{3}) ds_{3}ds_{2}ds_{1},
\end{eqnarray*}
to $\theta_{1}<s_{1}<\xi_{1}, \theta_{2}<s_{2}<\xi_{2}, \theta_{3}<s_{3}<\xi_{3}$ and
\begin{eqnarray*}
    &&{\bf I}^{\mu,\psi}_{T} \Theta(\xi_{1},\xi_{2},\xi_{3})\notag\\&&\dfrac{1}{\Gamma(\mu_{1})\Gamma(\mu_{2})\Gamma(\mu_{3})} \int_{\xi_{1}}^{T_{1}}
    \int_{\xi_{2}}^{T_{2}}
    \int_{\xi_{3}}^{T_{3}}
    \psi'(s_{1})\psi'(s_{2})\psi'(s_{3})
    (\psi(s_{1})-\psi(\xi_{1}))^{\mu_{1}-1}\notag\\&&
    \times
    (\psi(s_{2})-\psi(\xi_{2}))^{\mu_{2}-1}
    (\psi(s_{3})-\psi(\xi_{3}))^{\mu_{3}-1}
    \Theta(s_{1},s_{2},s_{3}) ds_{3}ds_{2}ds_{1},
\end{eqnarray*}
with $\xi_{1}<s_{1}<T_{1}, \xi_{2}<s_{2}<T_{2}, \xi_{3}<s_{3}<T_{3}$, $\xi_{1}\in[\theta_{1},T_{1}]$, $\xi_{2}\in[\theta_{2},T_{2}]$ and $\xi_{3}\in[\theta_{3},T_{3}]$, respectively.
\end{itemize}

On the other hand, let $\Theta,\psi \in C^{n}(\Lambda)$ two functions such that $\psi$ is increasing and $\psi'(\xi_{j})\neq 0$ with $\xi_{j}\in[\theta_{j},T_{j}]$, $j\in \left\{1,2,3 \right\}$. The left and
right-sided $\psi$-Hilfer fractional partial derivative of $3$-variables of $\Theta\in AC^{n}(\Lambda)$ of order $\mu=(\mu_{1},\mu_{2},\mu_{3})$ $(0<\mu_{1},\mu_{2},\mu_{3}\leq 1)$ and type $\nu=(\nu_{1},\nu_{2},\nu_{3})$ where $0\leq\nu_{1},\nu_{2},\nu_{3}\leq 1$, are defined by \cite{Srivastava,J1}
\begin{eqnarray}\label{derivada1}
&&{^{\mathbf H}\mathfrak{D}}^{\mu,\nu;\psi}_{\theta}\Theta(\xi_{1},\xi_{2},\xi_{3})\notag\\&&= {\bf I}^{\nu(1-\mu),\psi}_{\theta} \Bigg(\frac{1}{\psi'(\xi_{1})\psi'(\xi_{2})\psi'(\xi_{3})} \Bigg(\frac{\partial^{3}} {\partial \xi_{1}\partial \xi_{2}\partial \xi_{3}}\Bigg) \Bigg) {\bf I}^{(1-\nu)(1-\mu),\psi}_{\theta} \Theta(\xi_{1},\xi_{2},\xi_{3})\notag\\
\end{eqnarray}
and
\begin{eqnarray}\label{derivada2}
&&{^{\mathbf H}\mathfrak{D}}^{\mu,\nu;\psi}_{T}\Theta(\xi_{1},\xi_{2},\xi_{3})\notag\\&&= {\bf I}^{\nu(1-\mu),\psi}_{T} \Bigg(-\frac{1}{\psi'(\xi_{1})\psi'(\xi_{2})\psi'(\xi_{3})} \Bigg(\frac{\partial^{3}} {\partial \xi_{1}\partial \xi_{2}\partial \xi_{3}}\Bigg) \Bigg) {\bf I}^{(1-\nu)(1-\mu),\psi}_{T} \Theta(\xi_{1},\xi_{2},\xi_{3}),\notag\\
\end{eqnarray}
where $\theta$ and $T$ are the same parameters presented in the definition of fractional integrals ${\bf I}_{T}^{\mu;\psi}(\cdot)$ and ${\bf I}_ {\theta}^{\mu;\psi}(\cdot)$. For a study of $N$-variables, see \cite{Srivastava,J1}.

The $\psi$-fractional space is given by \cite{Sousa,Sousa3}
\begin{equation*}
    \mathbb{H}^{\alpha,\beta;\psi}_{r}(\Omega) = \left\{\Theta\in L^{r}(\Omega); \left\vert^{\rm H}\mathfrak{D}^{\alpha,\beta;\psi}_{0+} \Theta \right\vert \in L^{r} (\Omega);\,\ \Theta=0\right\}
\end{equation*}
with the norm
\begin{equation*}
    \left\Vert \Theta \right\Vert = \left\Vert \Theta \right\Vert_{L^{r}}+\left\Vert ^{\rm H}\mathfrak{D}^{\alpha,\beta;\psi}_{0+} \Theta \right\Vert_{L^{r}}.
\end{equation*}
denote by $\mathbb{H}^{\alpha,\beta;\psi}_{r}(\Omega)$ the closure of $C^{\infty}_{0}(\Omega)$ in $\mathbb{H}^{\alpha,\beta;\psi}_{r}(\Omega)$.

Consider the unique solution $e_{r}\in \mathbb{H}^{\alpha,\beta;\psi}_{r}(\Omega)$ of the boundary value problem
\begin{equation}\label{2.3}
^{\rm H}\mathfrak{D}^{\alpha,\beta;\psi}_{T}\left( \left\vert ^{\rm H}\mathfrak{D}^{\alpha,\beta;\psi}_{0+} e_{r} \right\vert^{p}\,\,^{\rm H}\mathfrak{D}^{\alpha,\beta;\psi}_{0+} e_{r} \right) =1,\,\, in\,\, \Omega
\end{equation}
\begin{equation}
    e_{r}=0,\,\,on\,\, \xi\in\partial\Omega,
\end{equation}
to discuss our result.

We say that $(\Theta_{1}, \Theta_{2})$ is a subsolution of problem (\ref{P}) if it is in $\mathbb{H}^{\alpha,\beta;\psi}_{p}(\Omega)\times \mathbb{H}^{\alpha,\beta;\psi}_{q}(\Omega)$ such that
\begin{eqnarray*}
&&\mathfrak{M}_{1}\left(\displaystyle\int_{\Omega} \left\vert ^{\rm H}\mathfrak{D}^{\alpha,\beta;\psi}_{0+} \Theta_{1} \right\vert^{p}d\xi \right)\, \displaystyle\int_{\Omega} \left\vert ^{\rm H}\mathfrak{D}^{\alpha,\beta;\psi}_{0+} \Theta_{1} \right\vert^{p-2}\, ^{\rm H}\mathfrak{D}^{\alpha,\beta;\psi}_{0+} \Theta_{1}\, ^{\rm H}\mathfrak{D}^{\alpha,\beta;\psi}_{0+} w\,d\xi \notag\\
    &&\leq\zeta \displaystyle\int_{\Omega} a(\xi) f(\Theta_{1},\Theta_{2}) w d\xi,\,\, \forall \, w\in W
\end{eqnarray*}
and
\begin{eqnarray*}
&&\mathfrak{M}_{2}\left(\displaystyle\int_{\Omega} \left\vert ^{\rm H}\mathfrak{D}^{\alpha,\beta;\psi}_{0+} \Theta_{2} \right\vert^{q}d\xi \right)\, \displaystyle\int_{\Omega} \left\vert ^{\rm H}\mathfrak{D}^{\alpha,\beta;\psi}_{0+} \Theta_{2} \right\vert^{q-2}\, ^{\rm H}\mathfrak{D}^{\alpha,\beta;\psi}_{0+} \Theta_{2}\, ^{\rm H}\mathfrak{D}^{\alpha,\beta;\psi}_{0+} w\,d\xi \notag\\
    &&\leq\zeta \displaystyle\int_{\Omega} b(\xi) \chi(\Theta_{1},\Theta_{2}) w d\xi,\,\, \forall \, w\in W
\end{eqnarray*}
where $W:=\left\{w\in C^{\infty}_{0}; w\geq 0\,\, in\,\, \Omega \right\}$. On the other hand, we say that $(\Psi_{1},\Psi_{2})\in\mathbb{H}^{\alpha,\beta;\psi}_{p}(\Omega)\times \mathbb{H}^{\alpha,\beta;\psi}_{q}(\Omega)$, is a supersolutions if
\begin{eqnarray*}
&&\mathfrak{M}_{1}\left(\displaystyle\int_{\Omega} \left\vert ^{\rm H}\mathfrak{D}^{\alpha,\beta;\psi}_{0+} \Psi_{1} \right\vert^{p}d\xi \right)\, \displaystyle\int_{\Omega} \left\vert ^{\rm H}\mathfrak{D}^{\alpha,\beta;\psi}_{0+} \Psi_{1} \right\vert^{p-2}\, ^{\rm H}\mathfrak{D}^{\alpha,\beta;\psi}_{0+} \Psi_{1}\, ^{\rm H}\mathfrak{D}^{\alpha,\beta;\psi}_{0+} w\,d\xi \notag\\
    &&\geq\zeta \displaystyle\int_{\Omega} a(\xi) f(\Psi_{1},\Psi_{2}) w d\xi,\,\, \forall \, w\in W
\end{eqnarray*}
and
\begin{eqnarray*}
    &&\mathfrak{M}_{2}\left(\displaystyle\int_{\Omega} \left\vert ^{\rm H}\mathfrak{D}^{\alpha,\beta;\psi}_{0+} \Psi_{2} \right\vert^{q}d\xi \right)\, \displaystyle\int_{\Omega} \left\vert ^{\rm H}\mathfrak{D}^{\alpha,\beta;\psi}_{0+} \Psi_{2} \right\vert^{q-2}\, ^{\rm H}\mathfrak{D}^{\alpha,\beta;\psi}_{0+} \Psi_{2}\, ^{\rm H}\mathfrak{D}^{\alpha,\beta;\psi}_{0+} w\,d\xi \notag\\
    &&\geq\zeta \displaystyle\int_{\Omega} b(\xi) \chi(\Psi_{1},\Psi_{2}) w d\xi,\,\, \forall \, w\in W.
\end{eqnarray*}

\begin{theorem}{\rm\cite{222}} \label{Teorema1.4} Let $\eta_{1},(\eta_{1})_{k}\in E_{p}$, $k=1,2,...$. Then the following statements are equivalent to each other:

1) $\lim_{k\rightarrow\infty}||(\eta_{1})_{k}-\eta_{1}||_{p}=0$;

2) $\lim_{k\rightarrow\infty} p((\eta_{1})_{k}-\eta_{1})=0$;

3) $(\eta_{1})_{k}$ converges to $\eta_{1}$ in $\Omega$ in measure and $\lim_{k\rightarrow\infty} p((\eta_{1})_{k})=p(\eta_{1})$.    
\end{theorem}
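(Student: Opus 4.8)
\medskip

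\noindent\textbf{Proof proposal.} I would prove the three conditions mutually equivalent by first settling $(1)\Leftrightarrow(2)$ and then closing the loop via $(2)\Rightarrow(3)\Rightarrow(1)$. Two ingredients carry the whole argument: the standard comparison inequalities between the norm $\|\cdot\|_p$ and the modular $p(\cdot)$ on $E_p$, which come from the Luxemburg definition of $\|\cdot\|_p$ together with the convexity and the $\Delta_2$-property of $p(\cdot)$; and a Brezis--Lieb type decomposition of the modular, which is the only genuinely nontrivial point. Write $v_k:=(\eta_1)_k-\eta_1$ throughout.

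\emph{Step 1: $(1)\Leftrightarrow(2)$.} The comparison inequalities say that $\|v\|_p$ and $p(v)$ are simultaneously small: once $\|v\|_p\le1$ one has $p(v)\le\|v\|_p$, and once $p(v)\le1$ one has $\|v\|_p\le p(v)^{1/p}$ (these becoming the equalities $p(v)=\|v\|_p^{\,p}$ when $p(\cdot)$ is the $L^p$-type modular attached to the paper's constant exponent $p$). Applying the first bound along $v_k$ once $\|v_k\|_p\le1$ gives $(1)\Rightarrow(2)$; applying the second once $p(v_k)\le1$ gives $(2)\Rightarrow(1)$.

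\emph{Step 2: $(2)\Rightarrow(3)$.} Assume $p(v_k)\to0$; by Step~1 also $\|v_k\|_p\to0$. For convergence in measure, fix $\varepsilon>0$: on $A_{k,\varepsilon}:=\{\xi\in\Omega:|v_k(\xi)|>\varepsilon\}$ the density defining $p(v_k)$ is bounded below by a constant $c(\varepsilon)>0$, hence $c(\varepsilon)\,|A_{k,\varepsilon}|\le p(v_k)\to0$, so $|A_{k,\varepsilon}|\to0$. For $p((\eta_1)_k)\to p(\eta_1)$, use that $p(\cdot)^{1/p}$ is (equivalent to) a norm, so the reverse triangle inequality gives $\big|\,p((\eta_1)_k)^{1/p}-p(\eta_1)^{1/p}\,\big|\le p(v_k)^{1/p}\to0$; in the purely modular formulation one instead invokes the norm-continuity of $p(\cdot)$ on $E_p$, now available since $\|v_k\|_p\to0$.

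\emph{Step 3: $(3)\Rightarrow(1)$.} It suffices to prove $p(v_k)\to0$, and by the subsequence principle it is enough to extract from any subsequence a further one along which this holds. Along such a subsequence, convergence in measure improves to $(\eta_1)_k\to\eta_1$ a.e.\ in $\Omega$; since $p((\eta_1)_k)$ converges it is bounded, so the Brezis--Lieb lemma applied to the modular densities yields
\[
p\big((\eta_1)_k\big)=p(v_k)+p(\eta_1)+o(1),\qquad k\to\infty .
\]
Comparing with $p((\eta_1)_k)\to p(\eta_1)$ forces $p(v_k)\to0$, and then $(1)$ follows from Step~1. I expect Step~3 to be the main obstacle: the Brezis--Lieb identity must be justified for the modular $p(\cdot)$ rather than a fixed $L^p$-norm, and one is handed only convergence in measure, not a.e.\ convergence. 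The clean remedy is Vitali's convergence theorem --- the hypothesis $p((\eta_1)_k)\to p(\eta_1)$ together with the $\Delta_2$-condition rules out concentration of mass, making the family of modular densities of $v_k$ uniformly integrable, after which a.e.\ convergence along a subsequence yields $p(v_k)\to0$ directly. Everything outside Step~3 is routine manipulation with the inequalities of Step~1.
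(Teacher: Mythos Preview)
The paper does not prove this statement at all: Theorem~\ref{Teorema1.4} is quoted verbatim from Fan--Zhao \cite{222} with no accompanying argument, and is used only as a black box in the proof of Lemma~\ref{Lemma2.1}. So there is no ``paper's own proof'' to compare against.

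That said, your proposal is correct and is essentially the standard argument one finds in the cited source. Step~1 and Step~2 are routine, as you say. For Step~3 your two suggested routes (Brezis--Lieb along an a.e.\ convergent subsequence, or uniform integrability plus Vitali) are both valid and are exactly the tools used in the literature for this implication; the Vitali route is in fact closer to how Fan--Zhao organize it. One minor remark: in the paper the exponent is a constant, so $p(v)=\|v\|_p^{\,p}$ exactly, and $(1)\Leftrightarrow(2)$ is trivial while $(3)\Rightarrow(1)$ reduces to the classical fact that convergence in measure together with convergence of the $L^p$-norms implies $L^p$-convergence (Riesz, or Brezis--Lieb). Your more general modular formulation is not needed here, but it does no harm.
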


{\color{red}The idea behind the proof of {\bf Lemma \ref{Lemma2.1}} is to establish the idea of the supersolutions and subsolutions method for systems, in this case, non-local.}

\begin{lemma}\label{Lemma2.1} Suppose that $\mathfrak{M}:\mathbb{R}^{+}_{0}\rightarrow\mathbb{R}^{+}$ is increasing and continuous. Furthermore, assume that there exists $m_{0}>0$ such that $\mathfrak{M}(t)\geq m_{0}$ for all $t\in \mathbb{R}^{+}_{-}$. If the functions $\eta_{1},\eta_{2}\in \mathbb{H}^{\alpha,\beta;\psi}_{r}(\Omega)$ satisfy
\begin{eqnarray}\label{2.4}
    &&\mathfrak{M}\left(\displaystyle\int_{\Omega} \left\vert ^{\rm H}\mathfrak{D}^{\alpha,\beta;\psi}_{0+} \eta_{1}\right\vert^{r}d\xi \right)\, \displaystyle\int_{\Omega} \left\vert ^{\rm H}\mathfrak{D}^{\alpha,\beta;\psi}_{0+} \eta_{1}\right\vert^{r-2}\, ^{\rm H}\mathfrak{D}^{\alpha,\beta;\psi}_{0+} \eta_{1}\, ^{\rm H}\mathfrak{D}^{\alpha,\beta;\psi}_{0+} \varphi\,d\xi \notag\\
    &&\leq {\color{red}\mathfrak{M}\left(\displaystyle\int_{\Omega} \left\vert ^{\rm H}\mathfrak{D}^{\alpha,\beta;\psi}_{0+} \eta_{2}\right\vert^{r}d\xi \right)\, \displaystyle\int_{\Omega} \left\vert ^{\rm H}\mathfrak{D}^{\alpha,\beta;\psi}_{0+} \eta_{2}\right\vert^{r-2}\, ^{\rm H}\mathfrak{D}^{\alpha,\beta;\psi}_{0+} \eta_{2}\, ^{\rm H}\mathfrak{D}^{\alpha,\beta;\psi}_{0+} \varphi\,d\xi}
\end{eqnarray}
for all $\varphi\in \mathbb{H}^{\alpha,\beta;\psi}_{r}(\Omega)$, $\varphi \geq 0$, then $\eta_{1}\leq \eta_{2}$ in $\Omega$.
\end{lemma}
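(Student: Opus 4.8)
The plan is to run the classical weak comparison argument for quasilinear operators, adapted to the nonlocal Kirchhoff weight. Throughout, abbreviate $D:={}^{\rm H}\mathfrak{D}^{\alpha,\beta;\psi}_{0+}$ and put $M_i:=\mathfrak{M}\!\left(\int_{\Omega}|D\eta_i|^{r}\,d\xi\right)$, so $M_1,M_2\ge m_0>0$, and recall $r>1$. First I would insert the test function $\varphi:=(\eta_1-\eta_2)^{+}=\max\{\eta_1-\eta_2,0\}$ into \eqref{2.4}. This is admissible: $\varphi\ge 0$ is immediate, and $\varphi\in\mathbb{H}^{\alpha,\beta;\psi}_{r}(\Omega)$ follows from the lattice (truncation) property of the space, which one gets by a standard approximation using the density of $C^{\infty}_{0}(\Omega)$ together with the characterization of convergence recalled in Theorem \ref{Teorema1.4}. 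Since $D\varphi=D\eta_1-D\eta_2$ a.e. on $\Omega^{+}:=\{\xi\in\Omega:\eta_1(\xi)>\eta_2(\xi)\}$ and $D\varphi=0$ a.e. on $\Omega\setminus\Omega^{+}$, the inequality \eqref{2.4} collapses to
\[
M_1\!\int_{\Omega^{+}}\!|D\eta_1|^{r-2}D\eta_1\,(D\eta_1-D\eta_2)\,d\xi\ \le\ M_2\!\int_{\Omega^{+}}\!|D\eta_2|^{r-2}D\eta_2\,(D\eta_1-D\eta_2)\,d\xi .
\]

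Next I would use the strict monotonicity of the field $t\mapsto|t|^{r-2}t$, namely $\big(|a|^{r-2}a-|b|^{r-2}b\big)(a-b)\ge 0$ with equality iff $a=b$. Writing the ``monotonicity defect'' on $\Omega^{+}$ as $\mathcal{E}:=\int_{\Omega^{+}}\big(|D\eta_1|^{r-2}D\eta_1-|D\eta_2|^{r-2}D\eta_2\big)(D\eta_1-D\eta_2)\,d\xi\ge 0$, the displayed inequality rearranges to $M_1\mathcal{E}\le(M_2-M_1)Q$, where $Q:=\int_{\Omega^{+}}|D\eta_2|^{r-2}D\eta_2\,(D\eta_1-D\eta_2)\,d\xi$. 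Equivalently, and perhaps more transparently, the rescaling $\widetilde\eta_i:=M_i^{1/(r-1)}\eta_i$ satisfies $|D\widetilde\eta_i|^{r-2}D\widetilde\eta_i=M_i|D\eta_i|^{r-2}D\eta_i$, so \eqref{2.4} is exactly the weak comparison inequality for the pure $\psi$-Hilfer $r$-Laplacian written in the pair $(\widetilde\eta_1,\widetilde\eta_2)$; testing that with $(\widetilde\eta_1-\widetilde\eta_2)^{+}$ and invoking strict monotonicity and a Poincaré inequality in $\mathbb{H}^{\alpha,\beta;\psi}_{r}(\Omega)$ (available since its elements vanish on $\partial\Omega$) yields $\widetilde\eta_1\le\widetilde\eta_2$, i.e. $M_1^{1/(r-1)}\eta_1\le M_2^{1/(r-1)}\eta_2$ a.e. in $\Omega$.

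It remains to pass from $M_1^{1/(r-1)}\eta_1\le M_2^{1/(r-1)}\eta_2$ back to $\eta_1\le\eta_2$, and this is where the nonlocality bites; I expect this to be the genuine obstacle, since the rescaling that linearizes the operator distorts the pointwise order. I would argue by cases on the sign of $M_1-M_2$. If $M_1\le M_2$, one exploits the sign of $Q$ on $\Omega^{+}$ — via Hölder, $Q\le\|D\eta_2\|_{L^{r}(\Omega^{+})}^{r-1}\big(\|D\eta_1\|_{L^{r}(\Omega^{+})}-\|D\eta_2\|_{L^{r}(\Omega^{+})}\big)$ — together with $M_1\mathcal{E}\le(M_2-M_1)Q$ to force $\mathcal{E}=0$. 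If $M_1>M_2$, then, $\mathfrak{M}$ being increasing, necessarily $\int_{\Omega}|D\eta_1|^{r}>\int_{\Omega}|D\eta_2|^{r}$, and one combines this global inequality with the pointwise data on $\Omega^{+}$ (where $\eta_1>\eta_2$, and where in fact $\eta_1,\eta_2>0$, as follows from $\widetilde\eta_1\le\widetilde\eta_2$) to reach a contradiction, again giving $\mathcal{E}=0$. Once $\mathcal{E}=0$, strict monotonicity gives $D\eta_1=D\eta_2$ a.e. on $\Omega^{+}$, hence $D(\eta_1-\eta_2)^{+}=0$ a.e. in $\Omega$; the Poincaré inequality in $\mathbb{H}^{\alpha,\beta;\psi}_{r}(\Omega)$ then forces $(\eta_1-\eta_2)^{+}\equiv 0$, that is $\eta_1\le\eta_2$ in $\Omega$. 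The routine parts here are the admissibility of $\varphi$, the algebraic monotonicity inequality, and Poincaré; the delicate step is handling the two distinct Kirchhoff weights $M_1,M_2$ simultaneously and propagating the a.e. inequality of the rescaled functions down to the originals.
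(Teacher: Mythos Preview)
Your overall framework---inserting the test function $\varphi=(\eta_1-\eta_2)^{+}$ into \eqref{2.4} and trying to exploit the monotonicity of $t\mapsto|t|^{r-2}t$---matches the paper's, but your treatment of the two nonlocal weights $M_1,M_2$ diverges from it and does not close.

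The paper does \emph{not} rescale or split into cases. Instead it defines the functional $\Xi$ with $\Xi'(\eta)(\varphi)=\mathfrak{M}(\rho_r(\eta))\int_\Omega|D\eta|^{r-2}D\eta\,D\varphi\,d\xi$ and proves directly that $\Xi'$ is (strictly) monotone on the whole space: by Young's inequality $\int_\Omega|D\eta_i|^{r-2}D\eta_i\,D\eta_j\,d\xi\le \tfrac{1}{r'}\rho_r(\eta_i)+\tfrac{1}{r}\rho_r(\eta_j)$, and expanding $\langle\Xi'(\eta_1)-\Xi'(\eta_2),\eta_1-\eta_2\rangle$ then collapses to the single expression $\big(\mathfrak{M}(\rho_r(\eta_1))-\mathfrak{M}(\rho_r(\eta_2))\big)\big(\rho_r(\eta_1)-\rho_r(\eta_2)\big)\ge 0$, nonnegative because $\mathfrak{M}$ is increasing. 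Having the full Kirchhoff operator monotone, the paper then takes $\varphi=(\eta_1-\eta_2)^{+}$ and reads off the conclusion. The point is that the Young-inequality computation absorbs both $M_1$ and $M_2$ simultaneously into one sign-definite quantity; no comparison of $M_1$ with $M_2$ is ever needed.

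Your rescaling route, by contrast, only yields $M_1^{1/(r-1)}\eta_1\le M_2^{1/(r-1)}\eta_2$, and the attempt to descend to $\eta_1\le\eta_2$ has a real gap. In the case $M_1<M_2$ you appeal to the H\"older bound $Q\le\|D\eta_2\|_{L^r(\Omega^+)}^{r-1}\big(\|D\eta_1\|_{L^r(\Omega^+)}-\|D\eta_2\|_{L^r(\Omega^+)}\big)$, but this gives no sign for $Q$: nothing prevents $\|D\eta_1\|_{L^r(\Omega^+)}>\|D\eta_2\|_{L^r(\Omega^+)}$, and then $M_1\mathcal{E}\le(M_2-M_1)Q$ is perfectly compatible with $\mathcal{E}>0$. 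In the case $M_1>M_2$ the promised contradiction from ``$\int_\Omega|D\eta_1|^r>\int_\Omega|D\eta_2|^r$ plus pointwise data on $\Omega^{+}$'' is not supplied, and it is unclear what inequality it would violate---the global energy ordering says nothing about the restriction to $\Omega^{+}$, and the positivity remark (``$\eta_1,\eta_2>0$ on $\Omega^{+}$'') does not follow from $\tilde\eta_1\le\tilde\eta_2$ without further sign information on $\eta_2$. You correctly identified this passage as the obstacle; the paper's Young-inequality monotonicity argument is precisely the device that removes it.
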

\begin{proof} Consider the functional $\Xi:\mathbb{H}^{\alpha,\beta;\psi}_{r}(\Omega)\rightarrow\mathbb{R}$ given by
\begin{equation*}
    \Xi(\eta_{1}):=\frac{1}{r} \widehat{\mathfrak{M}} \left(\displaystyle\int_{\Omega} \left\vert ^{\rm H}\mathfrak{D}^{\alpha,\beta;\psi}_{0+} \eta_{2}\right\vert^{r}d\xi\right),\,\, \eta_{1}\in \mathbb{H}^{\alpha,\beta;\psi}_{r}(\Omega).
\end{equation*}
Note that the functional $\Xi$ is a continuously Gateaux differentiable whose Gateaux derivative at the point $\eta_{1}\in \mathbb{H}^{\alpha,\beta;\psi}_{r}(\Omega)$ is the functional $\Xi'\in \mathbb{H}^{\alpha,\beta;\psi}_{r}(\Omega)$ given by
\begin{equation*}
    \Xi'(\eta_{1}) (\varphi) = \mathfrak{M} \left(\displaystyle\int_{\Omega} \left\vert ^{\rm H}\mathfrak{D}^{\alpha,\beta;\psi}_{0+} \eta_{1}\right\vert^{r} d\xi\right) \int_{\Omega} \left\vert ^{\rm H}\mathfrak{D}^{\alpha,\beta;\psi}_{0+} \eta_{1}\right\vert^{r-2}\,\,  ^{\rm H}\mathfrak{D}^{\alpha,\beta;\psi}_{0+} \eta_{1} \, ^{\rm H}\mathfrak{D}^{\alpha,\beta;\psi}_{0+}\varphi d\xi,\,\, 
\end{equation*}
with $\varphi\in \mathbb{H}^{\alpha,\beta;\psi}_{r}(\Omega)$.

Let $\rho_{p}(\eta_{1})=\displaystyle\int_{\Omega} \left\vert ^{\rm H}\mathfrak{D}^{\alpha,\beta;\psi}_{0+} \eta_{1}\right\vert^{r} d\xi$ for all $\eta_{1}\in \mathbb{H}^{\alpha,\beta;\psi}_{r}(\Omega)$. Now, if we choose $(\eta_{1})_{k}=\eta_{2}$ in {\bf Theorem \ref{Teorema1.4}}, we can see that $\eta_{1}=\eta_{2}$ in $\mathbb{H}^{\alpha,\beta;\psi}_{r}(\Omega)$ if and only if $\rho_{r}(\eta_{1})=\rho_{r}(\eta_{2})$. Hence, for any $\eta_{1},\eta_{2}\in \mathbb{H}^{\alpha,\beta;\psi}_{r}(\Omega)$, with $\eta_{1} \neq \eta_{2}$ in  $\mathbb{H}^{\alpha,\beta;\psi}_{r}(\Omega)$, it's follows that $\rho_{r} (\eta_{1}) \neq \rho_{r}(\eta_{2})$. Using the Young inequality, yields
\begin{eqnarray*}
     \displaystyle\int_{\Omega} \left\vert ^{\rm H}\mathfrak{D}^{\alpha,\beta;\psi}_{0+} \eta_{1}\right\vert^{r-2}\, ^{\rm H}\mathfrak{D}^{\alpha,\beta;\psi}_{0+} \eta_{1}\,\, ^{\rm H}\mathfrak{D}^{\alpha,\beta;\psi}_{0+} \eta_{2}\,d\xi \leq \int_{\Omega} \left(\frac{\left\vert ^{\rm H}\mathfrak{D}^{\alpha,\beta;\psi}_{0+} \eta_{2}\right\vert^{r}}{r} +\frac{\left\vert ^{\rm H}\mathfrak{D}^{\alpha,\beta;\psi}_{0+} \eta_{1}\right\vert^{r'}}{r'}\right)d\xi,
\end{eqnarray*}
\begin{eqnarray*}
     \displaystyle\int_{\Omega} \left\vert ^{\rm H}\mathfrak{D}^{\alpha,\beta;\psi}_{0+} \eta_{2}\right\vert^{r-2}\, ^{\rm H}\mathfrak{D}^{\alpha,\beta;\psi}_{0+} \eta_{2}\,\, ^{\rm H}\mathfrak{D}^{\alpha,\beta;\psi}_{0+} \eta_{1}\,d\xi \leq \int_{\Omega} \left(\frac{\left\vert ^{\rm H}\mathfrak{D}^{\alpha,\beta;\psi}_{0+} \eta_{1}\right\vert^{r}}{r} +\frac{\left\vert ^{\rm H}\mathfrak{D}^{\alpha,\beta;\psi}_{0+} \eta_{2}\right\vert^{r'}}{r'}\right)d\xi
\end{eqnarray*}
where $r'=\dfrac{r}{r-1}$. Then, it's follows that
\begin{eqnarray*}
    \left< \Xi'(\eta_{1})-\Xi(\eta_{2}),\eta_{1}-\eta_{2}\right> =&& \left< \Xi'(\eta_{1}),\eta_{1}  \right>- \left<  \Xi'(\eta_{1}),\eta_{2}\right> -\left< \Xi'(\eta_{2}),\eta_{1}  \right>+ \left<  \Xi'(\eta_{2}),\eta_{2}\right> \notag\\ && \geq \left( \mathfrak{M}(\rho(\eta_{1}))-\mathfrak{M}(\rho(\eta_{2}))\right) (\rho(\eta_{1})-\rho(\eta_{2}))\notag\\ && \geq 0,
\end{eqnarray*}
since $\mathfrak{M}(t)$ is monotone. 

{\bf Affirmation: $\Xi'$ is strictly monotone. }

Indeed, if $\left< \Xi'(\eta_{1}),\Xi'(\eta_{2}),\eta_{1}-\eta_{2}  \right>=0$, then we have $\rho_{r}(\eta_{1})=\rho_{r}(\eta_{2})$, i.e., $\eta_{1}=\eta_{2}$, which is contrary to $\eta_{1}\neq \eta_{2}$ in $\mathbb{H}^{\alpha,\beta;\psi}_{r}(\Omega)$. Therefore, $\left< \Xi'(\eta_{1})- \Xi'(\eta_{2}),\eta_{1}-\eta_{2}\right> >0$. Thus, $\Xi'$ is a strictly monotone $\mathbb{H}^{\alpha,\beta;\psi}_{r}(\Omega)$. Now let $\eta_{1},\eta_{2} \in \mathbb{H}^{\alpha,\beta;\psi}_{r}(\Omega)$ such that $(\ref{2.4})$ is verified. Taking $\varphi=(\eta_{1}-\eta_{2})^{+}$, the positive part of $\eta_{1}-\eta_{2}$, as a test function of (\ref{2.4}), yields
\begin{eqnarray}\label{2.5}
&&(\Xi'(\eta_{1})-\Xi(\eta_{2})) (\varphi)\notag\\=&& \mathfrak{M}\left(\displaystyle\int_{\Omega} \left\vert ^{\rm H}\mathfrak{D}^{\alpha,\beta;\psi}_{0+} \eta_{1}\right\vert^{r}d\xi \right)\, \displaystyle\int_{\Omega} \left\vert ^{\rm H}\mathfrak{D}^{\alpha,\beta;\psi}_{0+} \eta_{1}\right\vert^{r-2}\, ^{\rm H}\mathfrak{D}^{\alpha,\beta;\psi}_{0+} \eta_{1}\, ^{\rm H}\mathfrak{D}^{\alpha,\beta;\psi}_{0+} \varphi\,d\xi\notag\\
-&& \mathfrak{M}\left(\displaystyle\int_{\Omega} \left\vert ^{\rm H}\mathfrak{D}^{\alpha,\beta;\psi}_{0+} \eta_{2}\right\vert^{r}d\xi \right)\, \displaystyle\int_{\Omega} \left\vert ^{\rm H}\mathfrak{D}^{\alpha,\beta;\psi}_{0+} \eta_{2}\right\vert^{r-2}\, ^{\rm H}\mathfrak{D}^{\alpha,\beta;\psi}_{0+} \eta_{2}\, ^{\rm H}\mathfrak{D}^{\alpha,\beta;\psi}_{0+} \varphi\,d\xi
\notag\\ &&\leq 0.
\end{eqnarray}

Therefore, the inequality (\ref{2.5}) means that $\eta_{1}\leq \eta_{2}$ in $\Omega$.
\end{proof}

In this sense, consider
\begin{equation}\label{2.6}
\left\{ 
\begin{array}{ccc}
-\mathfrak{M}_{1}\left( \displaystyle\int_{\Omega }\left\vert ^{\rm H}\mathfrak{D}_{0+}^{\alpha ,\beta ;\psi
}\eta_{1}\right\vert ^{p}d\xi\right) \Delta _{p}^{\alpha ,\beta ;\psi }\eta_{1} & = & 
h(\xi,\eta_{1},\eta_{2}),\text{ in }\Omega  \\ 
-\mathfrak{M}_{2}\left(\displaystyle \int_{\Omega }\left\vert ^{\rm H}\mathfrak{D}_{0+}^{\alpha ,\beta ;\psi
}\eta_{2}\right\vert ^{q}d\xi\right) \Delta _{q}^{\alpha ,\beta ;\psi }\eta_{2} & = & 
k(\xi,\eta_{1},\eta_{2}),\text{ in }\Omega  \\ 
\eta_{1}=\eta_{2} & = & 0%
\end{array}%
\right.
\end{equation}
where 
$\Delta _{p}^{\alpha ,\beta ;\psi }(\cdot)$ and $\Delta _{q}^{\alpha ,\beta ;\psi }(\cdot)$ are given by Eq.(\ref{fra}) and Eq.(\ref{fra1}), respectively. Furthermore,
$\Omega =[0,T]\times \lbrack 0,T]$ is a bounded smooth domain of $\mathbb{R}^{2}$ and $h,k:\overline{\Omega }\times \mathbb{R}
\times \mathbb{R}\rightarrow \mathbb{R}$ satisfy:
\begin{itemize}
    \item  {\rm\bf(HK1)} $h(\xi,s,t)$ and $k(\xi,s,t)$ are Caratheodory functions and the are bounded if $s,t$ belongs to bounded sets.

    \item {\rm\bf(HK2)} There exists a function $g:\mathbb{R}\rightarrow \mathbb{R}$ being continuous, nondecreasing with $\chi(0)=0$, $0\leq \chi(s)\leq c(1+|s|^{\min \{p,q\}-1})$ for some $c>0$ and applications $s\rightarrow h(\xi,s,t)+\chi(s)$ and $t\rightarrow k(\xi,s,t)+\chi(t)$ are nondecreasing for a.e. $\xi\in \Omega$.
   \end{itemize}

{\color{red} The present work is the first one that addresses the sub and supersolution technique for problems involving the $\psi$-Hilfer fractional operator. This factor has a great impact on the area, and will certainly allow continuity for future work. The next step will be to present a general lower and upper-solution method. This method has been used by many authors, for example \cite{6,11,26} and the references therein. In this sense, before starting the proof of the main result of this paper, we will make comments in order to establish a Proposition \ref{Proposition2.2} which is of great importance throughout the paper.}

{\color{red} If $\eta_{1},\eta_{2}\in L^{\infty }(\Omega )$ with $\eta_{1}(\xi)\leq \eta_{2}(\xi)$ for a.e. $\xi\in \Omega $, we denote by $[\eta_{1},\eta_{2}]$ the set $\left\{ w\in L^{\infty }(\Omega ):\eta_{1}(\xi)\leq w(\xi)\leq \eta_{2}(\xi)\text{ for q.e. }\xi\in \Omega \right\} $. Using {\bf Lemma \ref{Lemma2.1}} and the method as in the proof of {\bf Theorem 2.4 } \cite{Miyagaki}, we can establish a version of the abstract lower and upper-solution method for our class of the operators as follows.}

\begin{proposition}\label{Proposition2.2} Suppose that $\mathfrak{M}_{1},\mathfrak{M}_{2}:\mathbb{R}_{0}^{+}\rightarrow \mathbb{R} ^{+}$ satisfies the condition ${\rm\bf (H_{1})}$ and $h,k$ satisfy the conditions {\rm\bf (HK1)} and {\rm\bf(HK2)}. Assume that $(\underline{\eta_{1}},\underline{\eta_{2}}),(\overline{\eta_{1}},\overline{\eta_{2}})$ are respectively a weak subsolution and a weak supersolution of system {\rm (\ref{2.6})} with $\underline{\eta_{1}} (\xi)\leq \overline{\eta_{1}}(\xi)$ and $\underline{\eta_{2}}(\xi)\leq \overline{\eta_{2}}(\xi)$ for a.e. 
$\xi\in \Omega .$ Then, there exists a minimal $((\eta_{1})_{\ast },(\eta_{2})_{\ast })$ (and,
respectively a minimal $(\eta_{1}\ast ,\eta_{2}^{\ast })$) weak solution for system {\rm(\ref{2.6})} in the set $[\underline{\eta_{1}},\overline{\eta_{1}}]\times \lbrack \underline{\eta_{2}}, \overline{\eta_{2}}]$. In particular, every weak solution $(\eta_{1},\eta_{2})\in \lbrack \underline{\eta_{1}},\overline{\eta_{1}}]\times \lbrack \underline{\eta_{2}},\overline{\eta_{2}}]$ of system {\rm(\ref{2.6})} satisfies $(\eta_{1})_{\ast }(\xi)\leq \eta_{1}(\xi)\leq \eta_{1}^{\ast }(\xi)$ and $(\eta_{2})_{\ast }(\xi)\leq \eta_{2}(\xi)\leq \eta_{2}^{\ast }(\xi)$ for a.e. $\xi\in \Omega$.    
\end{proposition}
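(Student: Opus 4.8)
The plan is to build a monotone (Picard-type) iteration between the given weak subsolution and supersolution, where the order at each step is controlled by the comparison principle of \textbf{Lemma \ref{Lemma2.1}} together with the monotone shift furnished by the auxiliary function $g$ in ${\rm\bf(HK2)}$.

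\emph{Step 1: the decoupled auxiliary problem and its solution operator.} Fix $(u,v)\in[\underline{\eta_1},\overline{\eta_1}]\times[\underline{\eta_2},\overline{\eta_2}]$ and consider the two scalar problems
\begin{equation*}
-\mathfrak{M}_{1}\!\left(\int_{\Omega}\bigl\vert{}^{\rm H}\mathfrak{D}^{\alpha,\beta;\psi}_{0+}w_{1}\bigr\vert^{p}d\xi\right)\Delta^{\alpha,\beta;\psi}_{p}w_{1}+g(w_{1})=h(\xi,u,v)+g(u)\ \text{ in }\Omega,\quad w_{1}|_{\partial\Omega}=0,
\end{equation*}
and the analogous one with $q$, $\mathfrak{M}_{2}$, $k$ for $w_{2}$. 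I would check that the map $w_{1}\mapsto-\mathfrak{M}_{1}(\rho_{p}(w_{1}))\Delta^{\alpha,\beta;\psi}_{p}w_{1}+g(w_{1})$ from $\mathbb{H}^{\alpha,\beta;\psi}_{p}(\Omega)$ into its dual is bounded, demicontinuous, coercive (using $\mathfrak{M}_{1}\ge m_{1}>0$ from ${\rm\bf(H_{1})}$ and the subcritical growth of $g$ from ${\rm\bf(HK2)}$) and strictly monotone --- the latter being precisely the Affirmation established inside the proof of \textbf{Lemma \ref{Lemma2.1}}, now reinforced by the strictly monotone lower-order term $g$. By the Browder--Minty theorem each auxiliary problem has a unique solution, so we obtain a well-defined operator $\mathcal{T}(u,v)=(w_{1},w_{2})$.

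\emph{Step 2: monotonicity, invariance, and the iteration.} Using \textbf{Lemma \ref{Lemma2.1}} and the monotonicity assumptions in ${\rm\bf(HK2)}$, I would show that (i) $\mathcal{T}$ is nondecreasing on the order interval; (ii) $\mathcal{T}(\underline{\eta_1},\underline{\eta_2})\ge(\underline{\eta_1},\underline{\eta_2})$ and $\mathcal{T}(\overline{\eta_1},\overline{\eta_2})\le(\overline{\eta_1},\overline{\eta_2})$ --- this is where the weak sub/supersolution inequalities for $(\ref{2.6})$ enter, compared against the auxiliary equation through \textbf{Lemma \ref{Lemma2.1}}; and (iii) consequently $\mathcal{T}\bigl([\underline{\eta_1},\overline{\eta_1}]\times[\underline{\eta_2},\overline{\eta_2}]\bigr)\subset[\underline{\eta_1},\overline{\eta_1}]\times[\underline{\eta_2},\overline{\eta_2}]$. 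Setting $(u_{0},v_{0})=(\underline{\eta_1},\underline{\eta_2})$ and $(u_{n+1},v_{n+1})=\mathcal{T}(u_{n},v_{n})$, properties (i)--(iii) produce a nondecreasing sequence trapped between the sub- and supersolution; symmetrically, starting from $(\overline{\eta_1},\overline{\eta_2})$ yields a nonincreasing one. Both converge pointwise a.e.; testing each auxiliary equation with its own solution and using $\mathfrak{M}_{i}\ge m_{i}>0$ together with the $L^{\infty}$ bounds on the iterates gives uniform bounds in $\mathbb{H}^{\alpha,\beta;\psi}_{p}(\Omega)\times\mathbb{H}^{\alpha,\beta;\psi}_{q}(\Omega)$, hence weak convergence of the whole monotone sequences and, by the compact embedding into $L^{p}\times L^{q}$, strong $L^{r}$ convergence and convergence in measure.

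\emph{Step 3: passing to the limit (the main obstacle) and minimality.} The delicate point is to pass to the limit simultaneously in the nonlocal Kirchhoff coefficients $\mathfrak{M}_{i}(\rho(u_{n}))$ and in the fractional $p$- (resp. $q$-) Laplacian terms, which forces us to upgrade the weak convergence of ${}^{\rm H}\mathfrak{D}^{\alpha,\beta;\psi}_{0+}u_{n}$ to strong convergence in $L^{p}$. I would obtain this via an $(S_{+})$-type argument: inserting $\varphi=u_{n}-u$ as a test function in the auxiliary equation, and using the strong $L^{r}$ convergence of the right-hand sides (ensured by ${\rm\bf(HK1)}$ and dominated convergence) together with $\mathfrak{M}_{1}\ge m_{1}>0$, one deduces that the modular $\rho_{p}(u_{n})$ converges to $\rho_{p}(u)$; then \textbf{Theorem \ref{Teorema1.4}} (equivalence of modular convergence, convergence in measure, and norm convergence) forces $u_{n}\to u$ strongly in $\mathbb{H}^{\alpha,\beta;\psi}_{p}(\Omega)$, and likewise $v_{n}\to v$. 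Continuity of $\mathfrak{M}_{1},\mathfrak{M}_{2}$ then lets the nonlocal terms pass to the limit, identifying $((\eta_1)_{\ast},(\eta_2)_{\ast})$ and $(\eta_1^{\ast},\eta_2^{\ast})$ as weak solutions of $(\ref{2.6})$ lying in the order interval. Finally, for minimality: if $(\eta_1,\eta_2)$ is any weak solution in $[\underline{\eta_1},\overline{\eta_1}]\times[\underline{\eta_2},\overline{\eta_2}]$ then $(\eta_1,\eta_2)=\mathcal{T}(\eta_1,\eta_2)$; since $(u_{0},v_{0})=(\underline{\eta_1},\underline{\eta_2})\le(\eta_1,\eta_2)$, monotonicity of $\mathcal{T}$ yields $(u_{n},v_{n})\le(\eta_1,\eta_2)$ for all $n$, hence $((\eta_1)_{\ast},(\eta_2)_{\ast})\le(\eta_1,\eta_2)$ a.e.; the symmetric argument from the supersolution gives $(\eta_1,\eta_2)\le(\eta_1^{\ast},\eta_2^{\ast})$, which is the final assertion.
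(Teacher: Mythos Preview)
The paper does not actually supply a proof of Proposition~\ref{Proposition2.2}: it only states the result and, in the paragraph immediately preceding it, says that ``Using \textbf{Lemma~\ref{Lemma2.1}} and the method as in the proof of \textbf{Theorem~2.4} \cite{Miyagaki}, we can establish a version of the abstract lower and upper-solution method for our class of the operators.'' So there is no detailed argument in the paper for you to be compared against; what you have written is precisely the standard monotone-iteration implementation of the sub-supersolution method that the paper is gesturing at, with \textbf{Lemma~\ref{Lemma2.1}} playing the role of the comparison principle and \textbf{Theorem~\ref{Teorema1.4}} used to upgrade modular/weak convergence to norm convergence when passing to the limit. In that sense your proposal is fully aligned with the paper's intended route.

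One point worth tightening: \textbf{Lemma~\ref{Lemma2.1}} as stated in the paper is a comparison principle for the \emph{bare} nonlocal operator $\eta\mapsto -\mathfrak{M}(\rho_r(\eta))\Delta^{\alpha,\beta;\psi}_{r}\eta$, while in Steps~1--2 you need comparison for the shifted operator $\eta\mapsto -\mathfrak{M}(\rho_r(\eta))\Delta^{\alpha,\beta;\psi}_{r}\eta+g(\eta)$ (this is exactly why the monotone shift $g$ from {\rm\bf(HK2)} is introduced). You allude to this (``now reinforced by the strictly monotone lower-order term $g$''), but strictly speaking you are invoking a mild extension of \textbf{Lemma~\ref{Lemma2.1}} rather than the lemma itself; the extension is immediate---the Affirmation in the proof of \textbf{Lemma~\ref{Lemma2.1}} gives monotonicity of the Kirchhoff part, $g$ is nondecreasing, and testing with $(w_1-\overline{\eta_1})^{+}$ (respectively $(\underline{\eta_1}-w_1)^{+}$) yields the desired order---so this is only a cosmetic gap, but it is worth saying explicitly since the paper never records that extended version.
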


\section{Main results}

In this section, we are going to prove the existence of a solution to the new class of fractional Kirchhoff-type with $p$-Laplacian equation via sub and supersolutions method.

Consider the problem for the fractional $p$-Laplace operator, given by \cite{Sousa1}
\begin{equation}\label{2.1}
\left\{ 
\begin{array}{ccc}
^{\rm H}\mathfrak{D}_{T}^{\alpha ,\beta ;\psi }\left( \left\vert ^{\rm H}\mathfrak{D}_{0+}^{\alpha ,\beta ;\psi }\eta_{1}\right\vert ^{p-2}\,\,^{\rm H}\mathfrak{D}_{0+}^{\alpha ,\beta ;\psi }\eta_{1}\right) 
& = & \zeta |\eta_{1}|^{p-2} \eta_{1},\text{ in }\Omega  \\ 
\eta_{1}& = & 0.
\end{array}%
\right.
\end{equation}

Let $\Theta_{1,p}\in C^{1}(\overline{\Omega})$ be the eigenfunctions corresponding to the first eigenvalues $\zeta_{1,p}$ of problem (\ref{2.1}) $\Theta_{1,p}>0$ in $\Omega$ and $||\Theta_{1,p}||_{\infty}=1$. I can be shown that $\Theta_{1,p}'(0)>0$ on $\partial\Omega$ and hence, depending on $\Omega$, there exists $m,\delta,\sigma>0$ such that
\begin{equation}\label{2.2}
\left\{ 
\begin{array}{ccc}
\left\vert ^{\rm H}\mathfrak{D}_{0+}^{\alpha ,\beta ;\psi }\eta_{1}\right\vert ^{p}-\zeta_{1,p} \Theta_{1,p}^{p} &\geq& m \,\,\,\,\,\,\text{ in }\Omega  \\ 
\Theta_{1,p} & \geq & \sigma\,\,on,\, x\in\Omega/\Omega_{\delta}
\end{array}%
\right.
\end{equation}
where $\overline{\Omega}_{\delta}=\left\{x\in\Omega;d(x,\partial\Omega)\leq \delta \right\}$.

\begin{proof} {\color{red} {\bf (Proof of Theorem \ref{Teorema1.1})}} The idea of the proof is to discuss an extension $f(s,t)$ and $\chi(s,t)$, $\forall(s,t)\in \mathbb{R}^{2}$ such that there exists $k_ {0}>0$ so that $\chi(s,t)\geq \dfrac{-k_{0}}{b_{0}}$ and $f(s,t)\geq - \dfrac {k_{0}}{a_{0}}$, for all $(s,t)\in\mathbb{R}^{2}$. In this sense, consider $\zeta _{1,r},\Theta _{1,r}$ $(r=p,q)$ and $\delta ,m,\sigma ,\Omega _{\delta }$.

We will discuss the proof in two steps.

{\bf Step 1: Subsolution.}

Let's check that it is $\zeta$ large
\begin{equation}
\left( \Phi_{1},\Phi_{2}\right) =\left( \frac{p-1}{p}\left[ \frac{\zeta
k_{0}}{m\mathfrak{M}_{1}}\right] ^{\frac{1}{p-1}}\Theta _{1,p}^{\frac{p}{p-1}},\frac{q-1}{%
q}\left[ \frac{\zeta k_{0}}{m\mathfrak{M}_{2}}\right] \Theta _{1,q}^{\frac{q}{q-1}%
}\right) 
\end{equation}
is a subsolution of problem (\ref{P}). Using the condition
${\bf(H_{1})}$, yields
\begin{eqnarray}\label{3.1}
&&\mathfrak{M}_{1}\left( \int_{\Omega }\left\vert ^{\rm H}\mathfrak{D}_{0+}^{\alpha ,\beta ;\psi }\Phi
_{1}\right\vert ^{p}d\xi\right) \int_{\Omega }\left\vert ^{\rm H}\mathfrak{D}_{0+}^{\alpha
,\beta ;\psi }\Phi_{1}\right\vert ^{p-2}\text{ }^{\rm H}\mathfrak{D}_{0+}^{\alpha ,\beta
;\psi }\Phi_{1}\text{ }^{\rm H}\mathfrak{D}_{0+}^{\alpha ,\beta ;\psi }wd\xi  \notag \\
&=&\frac{\zeta k_{0}}{m\mathfrak{M}_{1}}\mathfrak{M}_{1}\left( \int_{\Omega }\left\vert
^{\rm H}\mathfrak{D}_{0+}^{\alpha ,\beta ;\psi }\Phi_{1}\right\vert ^{p}d\xi\right)\notag\\ &\times&
\int_{\Omega }\left\vert ^{\rm H}\mathfrak{D}_{0+}^{\alpha ,\beta ;\psi }\Theta
_{1,p}\right\vert ^{p-2}\text{ }\Theta _{1,p}\text{ }^{\rm H}\mathfrak{D}_{0+}^{\alpha ,\beta
;\psi }\Theta _{1,p}\text{ }^{\rm H}\mathfrak{D}_{0+}^{\alpha ,\beta ;\psi }wd\xi  \notag \\
&\leq &\frac{\zeta k_{0}}{m\mathfrak{M}_{1}}\mathfrak{M}_{1}\left( \int_{\Omega }\left\vert
^{\rm H}\mathfrak{D}_{0+}^{\alpha ,\beta ;\psi }\Phi_{1}\right\vert ^{p}d\xi\right) \notag \\ &\times &\left[
\int_{\Omega }\left\vert ^{\rm H}\mathfrak{D}_{0+}^{\alpha ,\beta ;\psi }\Theta
_{1,p}\right\vert ^{p-2}\text{ }^{\rm H}\mathfrak{D}_{0+}^{\alpha ,\beta ;\psi }\Theta _{1,p}%
\text{ }^{\rm H}\mathfrak{D}_{0+}^{\alpha ,\beta ;\psi }\Theta _{1,p}\text{ }wd\xi-\int_{\Omega }\left\vert ^{\rm H}\mathfrak{D}_{0+}^{\alpha ,\beta ;\psi }\Theta
_{1,p}\right\vert ^{p-2}\text{ }wd\xi\right]   \notag \\
&=&\frac{\zeta k_{0}}{m\mathfrak{M}_{1}}\mathfrak{M}_{1}\left( \int_{\Omega }\left\vert
^{\rm H}\mathfrak{D}_{0+}^{\alpha ,\beta ;\psi }\Phi_{1}\right\vert ^{p}d\xi\right)
\int_{\Omega }\left( \zeta _{1,p}\Theta _{1,p}^{p}-\left\vert
^{\rm H}\mathfrak{D}_{0+}^{\alpha ,\beta ;\psi }\Theta _{1,p}\right\vert ^{p}\right) wd\xi 
\notag \\
&\leq &\frac{\zeta k_{0}}{m}\int_{\Omega }\left( \zeta _{1,p}\Theta
_{1,p}^{p}-\left\vert ^{\rm H}\mathfrak{D}_{0+}^{\alpha ,\beta ;\psi }\Theta
_{1,p}\right\vert ^{p}\right) wd\xi.\notag\\
\end{eqnarray}

Similarly, we also have
\begin{eqnarray}\label{3.2}
&&{\color{red}\mathfrak{M}_{2}\left( \int_{\Omega }\left\vert ^{\rm H}\mathfrak{D}_{0+}^{\alpha ,\beta ;\psi }\Phi
_{2}\right\vert ^{q}d\xi\right) \int_{\Omega }\left\vert ^{\rm H}\mathfrak{D}_{0+}^{\alpha
,\beta ;\psi }\Phi_{2}\right\vert ^{q-2}\text{ }^{\rm H}\mathfrak{D}_{0+}^{\alpha ,\beta
;\psi }\Phi_{2}\text{ }^{\rm H}\mathfrak{D}_{0+}^{\alpha ,\beta ;\psi }wd\xi } \notag \\
&\leq &\frac{\zeta k_{0}}{m}\int_{\Omega }\left( \zeta _{1,q}\Theta
_{1,q}^{q}-\left\vert ^{\rm H}\mathfrak{D}_{0+}^{\alpha ,\beta ;\psi }\Theta
_{1,q}\right\vert ^{q}\right) wd\xi.\notag\\
\end{eqnarray}

From (\ref{2.2}), yields in $\overline{\Omega _{\delta }}$
\begin{equation}
\zeta _{1,p}\Theta _{1,p}^{p}-\left\vert ^{\rm H}\mathfrak{D}_{0+}^{\alpha ,\beta ;\psi
}\Theta _{1,p}\right\vert ^{p}\leq -m
\end{equation}
and
\begin{equation}
\zeta _{1,q}\Theta _{1,q}^{q}-\left\vert ^{\rm H}\mathfrak{D}_{0+}^{\alpha ,\beta ;\psi
}\Theta _{1,q}\right\vert ^{q}\leq -m.
\end{equation}

{\color{red} Since $f(\Phi_{1},\Phi_{2})\geq -\dfrac{k_{0}}{a_{0}}$, $\chi(\Phi_{1},\Phi_{2})\geq -\dfrac{k_{0}}{b_{0}}$,
$a(\xi)\geq a_{0}>0$, $b(\xi)\geq b_{0}>0$, it follows that in $\overline{\Omega _{\delta }}$ (see to (\ref{2.2}))}
\begin{eqnarray}\label{3.3}
\frac{\zeta k_{0}}{m}\left( \zeta _{1,p}\Theta _{1,p}^{p}-\left\vert
^{\rm H}\mathfrak{D}_{0+}^{\alpha ,\beta ;\psi }\Theta _{1,p}\right\vert ^{p}\right)  &\leq
&\zeta a_{0}f(\Phi_{1},\Phi_{2})  \notag \\
&\leq &\zeta a(\xi)f(\Phi_{1},\Phi_{2})
\end{eqnarray}
and
\begin{eqnarray}\label{3.4}
\frac{\zeta k_{0}}{m}\left( \zeta _{1,q}\Theta _{1,q}^{q}-\left\vert
^{\rm H}\mathfrak{D}_{0+}^{\alpha ,\beta ;\psi }\Theta _{1,q}\right\vert ^{q}\right)  &\leq
&\zeta b_{0}\chi(\Phi_{1},\Phi_{2})  \notag \\
&\leq &\zeta b(\xi)\chi(\Phi_{1},\Phi_{2}).
\end{eqnarray}

On the other hand, in $\Omega \backslash \overline{\Omega _{\delta }}$, we get $\Theta _{1,p}\geq \sigma >0$ and $\Theta _{1,q}\geq \sigma >0$. So, for $\zeta $ large, using the ${\bf(H_{3})}$, (\ref{3.1}) and (\ref{3.2}) that $\forall\xi\in \Omega \backslash \overline{\Omega _{\delta }}$, yields
\begin{eqnarray}\label{*}
\frac{\zeta k_{0}}{m}\left( \zeta _{1,p}\Theta _{1,p}^{p}-\left\vert
^{\rm H}\mathfrak{D}_{0+}^{\alpha ,\beta ;\psi }\Theta _{1,p}\right\vert ^{p}\right)  &\leq &%
\frac{\zeta k_{0}}{m}\zeta _{1,p}  \notag \\
&\leq &\zeta a_{0}f(\Phi_{1},\Phi_{2})  \notag \\
&\leq &\zeta a(\xi)f(\Phi_{1},\Phi_{2})
\end{eqnarray}
and
\begin{eqnarray}\label{**}
\frac{\zeta k_{0}}{m}\left( \zeta _{1,q}\Theta _{1,q}^{q}-\left\vert
^{\rm H}\mathfrak{D}_{0+}^{\alpha ,\beta ;\psi }\Theta _{1,q}\right\vert ^{q}\right)  &\leq &%
\frac{\zeta k_{0}}{m}\zeta _{1,q}  \notag \\
&\leq &\zeta b_{0}\chi(\Phi_{1},\Phi_{2})  \notag \\
&\leq &\zeta b(\xi)\chi(\Phi_{1},\Phi_{2}).
\end{eqnarray}

Hence, replacing (\ref{*}), (\ref{**}) in (\ref{3.1}) and (\ref{3.2}), yields
\begin{eqnarray}
&&\mathfrak{M}_{1}\left( \int_{\Omega }\left\vert ^{\rm H}\mathfrak{D}_{0+}^{\alpha ,\beta ;\psi }\Phi
_{1}\right\vert ^{p}d\xi\right) \int_{\Omega }\left\vert ^{\rm H}\mathfrak{D}_{0+}^{\alpha
,\beta ;\psi }\Phi_{1}\right\vert ^{p-2}\text{ }^{\rm H}\mathfrak{D}_{0+}^{\alpha ,\beta
;\psi }\Phi_{1}\text{ }^{\rm H}\mathfrak{D}_{0+}^{\alpha ,\beta ;\psi }wd\xi  \notag \\
&\leq &\zeta \int_{\Omega }a(\xi)f(\Phi_{1},\Phi_{2})d\xi\notag\\
\end{eqnarray}
and
\begin{eqnarray}
&&\mathfrak{M}_{2}\left( \int_{\Omega }\left\vert ^{\rm H}\mathfrak{D}_{0+}^{\alpha ,\beta ;\psi }\Phi
_{2}\right\vert ^{q}d\xi\right) \int_{\Omega }\left\vert ^{\rm H}\mathfrak{D}_{0+}^{\alpha
,\beta ;\psi }\Phi_{2}\right\vert ^{q-2}\text{ }^{\rm H}\mathfrak{D}_{0+}^{\alpha ,\beta
;\psi }\Phi_{2}\text{ }^{\rm H}\mathfrak{D}_{0+}^{\alpha ,\beta ;\psi }wd\xi  \notag \\
&\leq &\zeta \int_{\Omega }b(\xi)\chi(\Phi_{1},\Phi_{2})d\xi.\notag\\
\end{eqnarray}

Thus, we have a subsolution $(\Phi_{1},\Phi_{2})$ of problem (\ref{P}) for $\zeta $ large.

{\bf Step II: Supersolution:}

Let
\begin{equation}
\Psi_{1}=\frac{c}{l_{p}}\left( \zeta \left\Vert a\right\Vert _{\infty
}\right) ^{\frac{1}{p-1}}e_{p},\text{ }\Psi_{2}=\left( \chi\left( c\zeta ^{\frac{%
1}{p-1}},c\zeta ^{\frac{1}{p-1}}\right) \right) ^{\frac{1}{q-1}}\left( 
\frac{\zeta \left\Vert b\right\Vert _{\infty }}{\mathfrak{M}_{2}}\right) ^{\frac{1}{%
q-1}}e_{q}
\end{equation}
where $e_{p},e_{q}$ are defined by (\ref{2.3}) and $l_{r}=\left\Vert
e_{r}\right\Vert _{\infty }$ for $r=p,q$. The objective is to prove that $(\Psi_{1},\Psi_{2})$ is a supersolution of problem (\ref{P}). So, we get
\begin{eqnarray}\label{3.5}
&&\mathfrak{M}_{1}\left( \int_{\Omega }\left\vert ^{\rm H}\mathfrak{D}_{0+}^{\alpha ,\beta ;\psi
}\Psi_{1}\right\vert ^{p}d\xi\right) \int_{\Omega }\left\vert ^{\rm H}\mathfrak{D}_{0+}^{\alpha
,\beta ;\psi }\Psi_{1}\right\vert ^{p-2}\text{ }^{\rm H}\mathfrak{D}_{0+}^{\alpha ,\beta ;\psi
}\Psi_{1}\text{ }^{\rm H}\mathfrak{D}_{0+}^{\alpha ,\beta ;\psi }wd\xi  \notag \\
&=&\frac{\left\Vert a\right\Vert _{\infty }}{\left( l_{p}\right) ^{p-1}}%
\left( \zeta ^{\frac{1}{p-1}}c\right) ^{p-1}\mathfrak{M}_{1}\left( \int_{\Omega
}\left\vert ^{\rm H}\mathfrak{D}_{0+}^{\alpha ,\beta ;\psi }\Psi_{1}\right\vert ^{p}d\xi\right) \notag\\&\times&
\text{ }\int_{\Omega }\left\vert ^{\rm H}\mathfrak{D}_{0+}^{\alpha ,\beta ;\psi
}e_{p}\right\vert ^{p-2}\text{ }^{\rm H}\mathfrak{D}_{0+}^{\alpha ,\beta ;\psi }e_{p}\text{ 
}^{\rm H}\mathfrak{D}_{0+}^{\alpha ,\beta ;\psi }wd\xi  \notag \\
&\geq &\frac{\mathfrak{M}_{1}\left\Vert a\right\Vert _{\infty }}{\left( l_{p}\right)
^{p-1}}\left( \zeta ^{\frac{1}{p-1}}c\right) ^{p-1}\int_{\Omega }wd\xi.\notag\\
\end{eqnarray}

Using the condition ${\bf(H_{4})}$, take $c$ large enough such that
\begin{eqnarray}\label{3.6}
&&\frac{\mathfrak{M}_{1}}{\left( l_{p}\right) ^{p-1}}\left( \zeta ^{\frac{1}{p-1}
}c\right) ^{p-1}\int_{\Omega }wd\xi \notag\\&\geq &\zeta \int_{\Omega }f\left(
\zeta ^{\frac{1}{p-1}}c,\left[ \chi(\zeta ^{\frac{1}{p-1}}c,\zeta ^{\frac{%
1}{p-1}}c)\right] ^{\frac{1}{q-1}}\left( \frac{\zeta \left\Vert
b\right\Vert _{\infty }}{\mathfrak{M}_{2}}\right) ^{\frac{1}{q-1}}l_{q}\right) wd\xi 
\notag \\
&\geq &\zeta \int_{\Omega }f\left( \zeta ^{\frac{1}{p-1}}c\frac{e_{p}}{%
l_{p}},\left[ \chi(\zeta ^{\frac{1}{p-1}}c,\zeta ^{\frac{1}{p-1}}c)\right]
^{\frac{1}{q-1}}\left( \frac{\zeta \left\Vert b\right\Vert _{\infty }}{%
\mathfrak{M}_{2}}\right) ^{\frac{1}{q-1}}e_{q}\right) wd\xi  \notag \\
&\geq &\zeta \int_{\Omega }f\left( \Psi_{1},\Psi_{2}\right) d\xi.
\end{eqnarray}

From (\ref{3.5}) and (\ref{3.6}), yields
\begin{eqnarray}\label{3.7}
&&\mathfrak{M}_{1}\left( \int_{\Omega }\left\vert ^{\rm H}\mathfrak{D}_{0+}^{\alpha ,\beta ;\psi
}\Psi_{1}\right\vert ^{p}d\xi\right) \int_{\Omega }\left\vert ^{\rm H}\mathfrak{D}_{0+}^{\alpha
,\beta ;\psi }\Psi_{1}\right\vert ^{p-2}\text{ }^{\rm H}\mathfrak{D}_{0+}^{\alpha ,\beta ;\psi
}\Psi_{1}\text{ }^{\rm H}\mathfrak{D}_{0+}^{\alpha ,\beta ;\psi }wd\xi  \notag \\
&\geq &\zeta \left\Vert a\right\Vert _{\infty }\int_{\Omega }f\left(
\Psi_{1},\Psi_{2}\right) d\xi  \notag \\
&\geq &\zeta a(\xi)\int_{\Omega }f\left( \Psi_{1},\Psi_{2}\right) d\xi.\notag\\
\end{eqnarray}

On the other hand, we have
\begin{eqnarray}\label{3.8}
&&\mathfrak{M}_{2}\left( \int_{\Omega }\left\vert ^{\rm H}\mathfrak{D}_{0+}^{\alpha ,\beta ;\psi
}\Psi_{2}\right\vert ^{p}d\xi\right) \int_{\Omega }\left\vert ^{\rm H}\mathfrak{D}_{0+}^{\alpha
,\beta ;\psi }\Psi_{2}\right\vert ^{p-2}\text{ }^{\rm H}\mathfrak{D}_{0+}^{\alpha ,\beta ;\psi
}\Psi_{2}\text{ }^{\rm H}\mathfrak{D}_{0+}^{\alpha ,\beta ;\psi }wd\xi  \notag \\
&=&\left\Vert b\right\Vert _{\infty }\zeta \chi\left( \zeta ^{\frac{1}{p-1}%
}c,\zeta ^{\frac{1}{p-1}}c\right) \mathfrak{M}_{2}\left( \int_{\Omega }\left\vert
^{\rm H}\mathfrak{D}_{0+}^{\alpha ,\beta ;\psi }\Psi_{2}\right\vert ^{p}d\xi\right) \text{ }\notag\\&&\times
\int_{\Omega }\left\vert ^{\rm H}\mathfrak{D}_{0+}^{\alpha ,\beta ;\psi }e_{q}\right\vert
^{q-2}\text{ }^{\rm H}\mathfrak{D}_{0+}^{\alpha ,\beta ;\psi }e_{q}\text{ }%
^{\rm H}\mathfrak{D}_{0+}^{\alpha ,\beta ;\psi }wd\xi  \notag \\
&\geq &\zeta \left\Vert b\right\Vert _{\infty }\chi\left( \zeta ^{\frac{1}{%
p-1}}c,\zeta ^{\frac{1}{p-1}}c\right) \int_{\Omega }wd\xi.\notag\\
\end{eqnarray}

Using the condition ${\bf(H_{5})}$, take $c$ large so that
\begin{equation}
\frac{1}{\left( \frac{\zeta \left\Vert b\right\Vert _{\infty }}{\mathfrak{M}_{2}}%
\right) ^{\frac{1}{q-1}}l_{q}}\geq \frac{\left[ \chi\left( \zeta ^{\frac{1}{%
p-1}}c,\zeta ^{\frac{1}{p-1}}c\right) \right] ^{\frac{1}{q-1}}}{c\zeta ^{%
\frac{1}{p-1}}}.
\end{equation}

So,
\begin{eqnarray}\label{3.9}
&&\chi\left( \zeta ^{\frac{1}{p-1}}c,\zeta ^{\frac{1}{p-1}}c\right)
\int_{\Omega }wd\xi \notag\\ &\geq &\int_{\Omega }\chi\left( \zeta ^{\frac{1}{p-1}}c,%
\left[ \chi\left( \zeta ^{\frac{1}{p-1}}c,\zeta ^{\frac{1}{p-1}}c\right) 
\right] ^{\frac{1}{q-1}}\left( \frac{\zeta \left\Vert b\right\Vert
_{\infty }}{\mathfrak{M}_{2}}\right) ^{\frac{1}{q-1}}l_{q}\right) wd\xi  \notag \\
&\geq &\int_{\Omega }\chi\left( c\zeta ^{\frac{1}{p-1}}\frac{e_{p}}{l_{p}},%
\left[ \chi\left( \zeta ^{\frac{1}{p-1}}c,\zeta ^{\frac{1}{p-1}}c\right) 
\right] ^{\frac{1}{q-1}}\left( \frac{\zeta \left\Vert b\right\Vert
_{\infty }}{\mathfrak{M}_{2}}\right) ^{\frac{1}{q-1}}e_{q}\right) wd\xi  \notag \\
&\geq &\int_{\Omega }\chi(\Psi_{1},\Psi_{2})wd\xi.
\end{eqnarray}

From (\ref{3.8}) and (\ref{3.9}), we deduce that
\begin{eqnarray}\label{3.10}
&&\mathfrak{M}_{2}\left( \int_{\Omega }\left\vert ^{\rm H}\mathfrak{D}_{0+}^{\alpha ,\beta ;\psi
}\Psi_{2}\right\vert ^{p}d\xi\right) \int_{\Omega }\left\vert ^{\rm H}\mathfrak{D}_{0+}^{\alpha
,\beta ;\psi }\Psi_{2}\right\vert ^{p-2}\text{ }^{\rm H}\mathfrak{D}_{0+}^{\alpha ,\beta ;\psi
}\Psi_{2}\text{ }^{\rm H}\mathfrak{D}_{0+}^{\alpha ,\beta ;\psi }wd\xi  \notag \\
&\geq &\zeta \left\Vert b\right\Vert _{\infty }\chi\left( \zeta ^{\frac{1}{%
p-1}}c,\zeta ^{\frac{1}{p-1}}c\right) \int_{\Omega }wd\xi  \notag \\
&\geq &\zeta \int_{\Omega }b(\xi)\chi(\Psi_{1},\Psi_{2})wd\xi.\notag\\
\end{eqnarray}

So from inequalities (\ref{3.7}) and (\ref{3.10}), it follows that $(\Psi_{1},\Psi_{2})$ is a supersolution of problem (\ref{P}). Moreover $\Psi_{i}\geq \Phi_{i}$ for $c$ large $i=1,2$. Therefore, by means of the {\bf Proposition \ref{Proposition2.2} }there exists a positive solution $(\eta_{1},\eta_{2})$ of (\ref{P}) with $\Phi_{1}\leq \eta_{1}\leq \Psi_{1}$ and $\Phi_{2}\leq \eta_{1}\leq \Psi_{2}$. This completes the proof of {\bf Theorem \ref{Teorema1.1}}.

\end{proof}

\section*{Acknowledgements}
{\color{red} The author thank very grateful to the anonymous reviewers for their useful comments that led to improvement of the manuscript.}

\section*{Funding}
Not applicable.

\section*{Availability of data and materials}
Not applicable.

\section*{{\bf Declarations}}

\section*{Ethical Approval}
Not applicable

\section*{Competing interests}

The author declare no competing interests.

\section*{Authors contributions}
 Not applicable



\end{document}